\newtheorem{thm}{Theorem}
\newtheorem{lem}{Lemma}
\title{\LARGE \bf
Robust Boundary Stabilization of Stochastic Hyperbolic PDEs
}
\author{Yihuai Zhang, Jean Auriol, Huan Yu$^{*}$
\thanks{Yihuai Zhang and Huan Yu are with the Thrust of Intelligent Transportation, The Hong Kong University of Science and Technology (Guangzhou), Guangzhou, 511400, China. Huan Yu is also affiliated with the Department of Civil and Environmental Engineering, The Hong Kong University of Science and Technology, Hong Kong SAR, China.}
\thanks{Jean Auriol is with the Universit\'{e} Paris-Saclay, CNRS, CentraleSup\'{e}lec, Laboratoire des Signaux et Syst\`{e}mes, 91190, Gif-sur-Yvette, France}
\thanks{$^{*}$ Corresponding author: Huan Yu ({\tt\small huanyu@ust.hk)}}}
\begin{document}

\maketitle
\thispagestyle{empty}
\pagestyle{empty}
\bibliographystyle{abbrv}

\begin{abstract}

This paper proposes a backstepping boundary control design for robust stabilization of linear first-order coupled hyperbolic partial differential equations (PDEs) with Markov-jumping parameters. The PDE system consists of 4 $\times$ 4 coupled hyperbolic PDEs whose first three characteristic speeds are positive and the last one is negative. We first design a full-state feedback boundary control law for a nominal, deterministic system using the backstepping method. Then, by applying Lyapunov analysis methods, we prove that the nominal backstepping control law can stabilize the PDE system with Markov jumping parameters if the nominal parameters are sufficiently close to the stochastic ones on average. The mean-square exponential stability conditions are theoretically derived and then validated via numerical simulations.
\end{abstract}

\section{Introduction}

Hyperbolic partial differential equations (PDEs) find applications in many engineering areas, including transportation systems~\cite{yu2022traffic}, open-channel flows \cite{de2003boundary}. Extensive research efforts have been made for boundary control problems of hyperbolic PDE systems. 
PDE backstepping, as a feedback stabilization method, has been developed for general first-order coupled hyperbolic PDEs in a series of work~\cite{coron2013local,di2013stabilization,hu2015control}. Further advances generalize the PDE backstepping for adaptive control~\cite{anfinsen2018adaptive}, robust stabilization to delay and disturbances~\cite{auriol2018delay,lamare2018robust}. These research results mainly pertain to deterministic PDE systems. On the other hand, scarce studies focus on stochastic PDE systems, the stochasticity being usually caused by uncertain parameters in real applications. The question of boundary control of uncertain hyperbolic PDEs with Markov jump parameters via the backstepping method remains open.

Previous results have focused on stochastic stability analysis with distributed or boundary controllers using linear matrix inequalities~\cite{amin2011exponential,bolzern2006almost,do2012continuous}.
Prieur~\cite{prieur2014stability} modeled the abrupt changes of boundary conditions as a piecewise constant function and derived sufficient conditions for the exponential stability of the switching system. Wang et al.~\cite{wang2012stochastically} examined the robustly stochastically exponential stability and stabilization of uncertain linear first-order hyperbolic PDEs with Markov jumping parameters, deriving sufficient stability conditions using linear matrix inequalities (LMIs) based on integral-type stochastic Lyapunov functional.  Zhang~\cite{zhang2017stochastic} studied traffic flow control of Markov jump hyperbolic systems, employing LMIs to derive sufficient conditions for boundary exponential stability. Auriol~\cite{auriol2023mean} first considered the mean-square exponential stability of a $2\times 2$ stochastic hyperbolic system using backstepping.

The main results of this paper are that we propose a robust stabilizing backstepping control law for a  $4\times 4$ stochastic hyperbolic PDEs. We first design a backstepping boundary control law to stabilize the nominal system without Markov jumping parameters. We prove that the nominal control law can still stabilize the stochastic PDE system, provided the nominal parameters are sufficiently close to the stochastic ones on average. The stability conditions are derived using a Lyapunov analysis. The contribution of this paper extends to both theoretical advancements and practical applications. 

This paper is organized as follows: In Section II, we introduce the stochastic hyperbolic PDE system and state the problem under consideration. In Section III, the nominal boundary controller is designed, and the mean-square exponential stabilization of the stochastic PDEs is proposed. In Section IV, a Lyapunov analysis is conducted to prove the nominal control law achieves the mean-square exponential stability of the PDE system with Markov jumping parameters. In Section V numerical simulations verify the theoretical results.

\section{Problem Statement}
In this paper, we consider the following stochastic PDE system 
\begin{align}
        \mathbf{w}_t^+(x,t)+\Lambda^{+}(t) \mathbf{w}^+_x(x,t) =& \Sigma^{++}(x,t)\mathbf{w}^+(x,t) \nonumber\\
        &+\Sigma^{+-}(x,t) \mathbf{w}^-(x, t) ,\label{clpsys1}\\
        \mathbf{w}^-_t(x, t)-\Lambda^{-}(t) \mathbf{w}^-_x(x, t) =& \Sigma^{-+}(x,t)\mathbf{w}^+,\label{clpsys2}
\end{align}
with the boundary conditions 
\begin{align}
    &{\mathbf{w}^+(0,t)}  = {Q}(t) \mathbf{w}^-(0, t), \label{clpsys3} \\
    &\mathbf{w}^-(1, t) = {R}(t)\mathbf{w}^+(1,t)+{U}(t), \label{clpsys4}
\end{align}
where $\mathbf{w}^{+}= [w_1, w_2, w_3]^\mathsf{T}$, $\mathbf{w}^{-} = w_4$, where the spatial and time domain are defined in $(x,t) \in [0,1] \times \mathbb{R}^+$. The different transport matrices are defined as
\begin{align*}
\Lambda^{+}(t) = \begin{bmatrix}
    \lambda_{1}(t) & 0 & 0 \\
0 & \lambda_{2}(t) & 0 \\
0 & 0 & \lambda_{3}(t)
\end{bmatrix},\Lambda^{-}_i =\lambda_{4}(t)
\end{align*}
The coefficient matrices are $\Sigma^{++}(x,t)\in \mathbb{R}^{3\times 3}$, $\Sigma^{+-}(x,t)\in \mathbb{R}^{1\times 3}$, $\Sigma^{-+}(x,t)\in \mathbb{R}^{3\times 1}$, ${Q}(t)\in \mathbb{R}^{3\times 1}$, ${R}(t)\in\mathbb{R}^{1\times 3}$. The function $U(t)$ is the control input. All the parameters of the systems are stochastic and we denote their concatenation $\mathscr{X}(t) = \{\Lambda^+(t),\Lambda^-(t),\Sigma^{++}(x,t),\Sigma^{+-}(x,t),\Sigma^{-+}(x,t), Q(t), R(t)\}$. We consider that the set $\mathscr{X}(t)$ corresponds to a homogeneous continuous Markov process  $\mathscr{X}(t), t \in \mathbb{R}^+$ with a finite number of states $ \mathscr{S} = \{\mathscr{X}_1,\mathscr{X}_2,\dots,\mathscr{X}_r\}$, whose realization is right continuous. For instance, we have $\mathscr{X}_j=\{\Lambda_j^+,\Lambda_j^-,\Sigma_j^{++}(x),\Sigma_j^{+-}(x),\Sigma_j^{-+}(x), Q_j, R_j\}$.

The transition probabilities $P_{ij}(t_1,t_2)$ denote the probability to switch from mode $\mathscr{X}_i$ at time $t_1$ to mode $\mathscr{X}_j$ at time $t_2$ ($(i,j) \in \{1, \dots, r\}^2,~0 \leq t_1 \leq t_2$). They satisfy ~$P_{ij}: \mathbb{R}^2 \rightarrow [0,1]$ with $\sum_{j=1}^{r} P_{ij}(t_1,t_2)=1$. Moreover, for $\varrho <t$, the transition probabilities ${P}_{ij}$ follows the Kolmogorov equation~\cite{hoyland2009system,kolmanovsky2001mean},
\begin{align}
    &\partial_t  {P}_{i j}(\varrho, t)=-\mathfrak{c}_j(t)  {P}_{i j}(\varrho, t)+\sum_{k=1}^{r}  {P}_{i k}(\varrho, t) \tau_{k j}(t),\notag\\ 
    &  {P}_{i i}(\varrho, \varrho)=1, \text { and }  {P}_{i j}(\varrho, \varrho)=0 \text { for } i \neq j
    \label{koleq}
\end{align}
where the $\tau_{ij}$ and $\mathfrak{c}_j=\sum_{k=1,k\neq j}^{r} \tau_{j k}$ are non-negative-valued functions such that $\tau_{ii}(t)=0$. The functions $\tau_{ij}$ are upper bounded by a constant $\tau^\star$.  For $\mathscr{X}_j \in \mathscr{S}$, we denote $||\mathscr{X}_j(t)||$ as
\begin{align}
||\mathscr{X}_j(t)||&=(||\Lambda^+_j(t)||^2+||\Lambda^-_j(t)||^2+||Q_j(t)||^2+||R_j(t)||^2\nonumber \\
&+\sup_{x\in [0,1]}||\Sigma_j^{++}(x)||^2+\sup_{x\in [0,1]}||\Sigma_j^{+-}(x)||^2\nonumber\\
&+\sup_{x\in [0,1]}||\Sigma_j^{-+}(x)||^2)^{\frac{1}{2}},
\end{align}
where we have used the standard Euclidean norm.
We assume that there exists known lower bounds $\underline{\mathscr{X}}$ and upper bounds $\overline{\mathscr{X}}$ such that for all $j$ , $\underline{\mathscr{X}} \leq ||\mathscr{X}_j||\leq \overline{\mathscr{X}}$. Moreover, we assume that the lower bounds of the stochastic velocities are always positive. More precisely, we have  $\underline{\Lambda_i^+}>0$, $\underline{\Lambda_i^-}>0$, which implies $\underline{\lambda_{1i}}>0$, $\underline{\lambda_{2i}}>0$, $\underline{\lambda_{3i}}>0$ and $\underline{\lambda_{4i}}>0$. Using the notations $\mathbf{w} = [\mathbf{w}^{+}, \mathbf{w}^{-}]$ and $\Lambda_i = \text{Diag}\{\lambda_{1}^i,\lambda_{2}^i,\lambda_{3}^i,-\lambda_{4}^i\}$, the system \eqref{clpsys1}-\eqref{clpsys4} can be rewritten in the compact form:
\begin{align}
    \partial_t \mathbf{w}(x, t)+\Lambda_{i} \partial_x \mathbf{w}(x, t)= \Theta_{i} \mathbf{w}(x,t), \label{eqwRiemann}
\end{align}
for $x \in (0,L)$, with the boundary condition:
\begin{align}
    \left[\begin{array}{l}
\mathbf{w}^{+}(0,t) \\
\mathbf{w}^{-}(1, t)
\end{array}\right]=G_{i}\left[\begin{array}{l}
\mathbf{w}^{+}(1,t) \\
\mathbf{w}^{-}(0, t)
\end{array}\right] + \left[\begin{array}{c}
     0  \\
    {U}(t)
\end{array}\right], \label{eqwRiemannbound}
\end{align}
where the coefficient matrix $\Theta_i$, $G_i$ are:
\begin{align*}
    \Theta_{i} =  \left[\begin{array}{cc}
    \Sigma^{++}_i(x) & \Sigma^{+-}_i(x) \\
    \Sigma^{-+}_i(x) & 0
\end{array}\right], G_{i} = \left[ \begin{array}{cc}
        0 & Q_i \\
        R_i & 0
    \end{array} \right]. 
\end{align*}
In the case of deterministic coefficients, equations~\eqref{eqwRiemann}-\eqref{eqwRiemannbound} naturally appear when modeling traffic networks with two classes of vehicles~\cite{burkhardt2021stop}.
\section{Backstepping Controller Design}
In this section, we propose a backstepping control design for the nominal system (that is, the system is in a known nominal deterministic state). We will then show that the stochastic system with this nominal control law is well-posed. Finally, we will state our main result, which is the mean-square exponential stability of the closed-loop system, provided the nominal parameters are sufficiently close to the stochastic ones on average. This result will be proved in the next section using a Lyapunov analysis.

\subsection{Backstepping transformation}

We consider in this section that the stochastic parameters are in the nominal mode $\mathscr{X}(t) = \mathscr{X}_{0}$. This nominal mode is not necessarily related to the set $\mathscr{S}$. Our objective is to design a control law that stabilizes this nominal system. We first simplify the structure of the system~\eqref{eqwRiemann} by removing the in-domain coupling terms in the equation. More precisely, let us consider the following backstepping transformation $\mathcal{K}_0$ 
\small
\begin{align}
    \mathcal{K}_0\mathbf{w}= \begin{pmatrix} \mathbf{w}^{+} \\\mathbf{w}^- -\int_0^x \mathbf{K}_0(x, \xi)\mathbf{w}^{+}(\xi,t) +N_0(x, \xi) \mathbf{w}^-(\xi, t)) d \xi \end{pmatrix}\label{back}
\end{align}
\normalsize
where the kernels $\mathbf{K}_0(x,\xi) \in \mathbb{R}^{1\times 3}$ and $N_0(x,\xi)\in\mathbb{R}^1$ are piecewise continuous functions defined on the triangular domain $\mathcal{T}=\{0 \leq \xi \leq x \leq 1\}$. We have
\begin{align}
    \mathbf{K}_0(x,\xi) = \left[ \begin{array}{ccc}
        k^0_1(x,\xi) & k^0_2(x,\xi) & k^0_3(x,\xi)
    \end{array}\right].
\end{align}
The different kernels verify the following PDE system
\small
\begin{align}
&-\Lambda^{-}_0(\mathbf{K}_0)_x(x,\xi) +(\mathbf{K}_0)_\xi(x,\xi)\Lambda^{+}_0=-\mathbf{K}_0(x,\xi)\Sigma^{++}_0(\xi) \notag\\
&-\Sigma^{-+}_0(\xi) N_0(x, \xi), \label{ker1} \\
&\Lambda^{-}_0 (N_0)_{x}(x, \xi)+\Lambda^{-}_0 (N_0)_{\xi}(x, \xi) =\mathbf{K}_0(x,\xi)\Sigma^{+-}_0(\xi), \label{ker2}
\end{align}
\normalsize
with the boundary conditions 
\begin{align}
\left(-\Lambda^{-}_0 \mathbf{I}_3-\Lambda^{+}_0\right) \mathbf{K}_0^\mathsf{T}(x,x)&=\Sigma^{-+^{\mathsf{T}}}_0(x) ,\label{ker3}\\
-\Lambda^{-}_0 N_0(x, 0)+  \mathbf{K}_0(x,0)\Lambda^{+}_0{Q}_0 &=0, \label{ker4}
\end{align}
where $\mathbf{I}_3$ is a $3\times 3$ identity matrix. The well-posedness of the kernel equations can be proved by adjusting the results from~\cite[Theorem 3.3]{hu2015control}. The solutions of the kernel equations can be expressed by integration along the characteristic lines. Applying the method of successive approximations, we can then prove the existence and uniqueness of the solution to the kernel equations~\eqref{ker1}-\eqref{ker4}.
Applying the backstepping transformation, we can define the target system state $\vartheta$ as $$\vartheta=(\alpha_1,\alpha_2,\alpha_3,\beta)=\mathcal{K}_0 \mathbf{w}.$$
And we denote the augmented states~$\mathscr{Q} = [\alpha_1,\alpha_2,\alpha_3]^\mathsf{T}$
The target system equations are given by:
\begin{small}
\begin{align}
&\mathscr{Q}_t(x,t)+\Lambda^{+}_0 \mathscr{Q}_x(x,t) =\Sigma^{++}_0(x)\mathscr{Q}(x,t)+\Sigma^{+-}_0(x) \beta(x,t) \nonumber\\
&+\int_0^x\mathbf{C}_0^+(x,\xi)\mathscr{Q}(\xi,t) d\xi +\int_0^x\mathbf{C}_0^-(x,\xi)\beta(\xi,t)d\xi,
\label{tar1}\\
&\beta_t\left(x, t\right)-\Lambda^{-}_0 \beta_x(x, t)=0, \label{tar2}
\end{align}
\end{small}
with the boundary conditions:
\begin{align}
\mathscr{Q}(0,t) =&{Q}_0 \beta(0, t), \label{tar3}\\
\beta(1, t)  =& R_0 \mathbf{w}^+(1,t) - \int_0^1 \mathbf{K}_0(1, \xi)\mathbf{w}^{+}(\xi,t)\nonumber\\
&+N_0(1, \xi) \mathbf{w}^-(\xi, t)) d \xi + \Bar{U}(t). \label{tar4}
\end{align}
where the coefficients $\mathbf{C}_0^+(x,\xi) \in \mathbb{R}^{3\times 3}$ and $\mathbf{C}_0^-(x,\xi) \in \mathbb{R}^{3\times 1}$ are bounded functions defined on the triangular domain $\mathcal{T}$. Their expressions can be found in~\cite{burkhardt2021stop}. Note that we still have the presence of $\mathbf{w}^+$ and $\mathbf{w}^-$ terms in equation~\eqref{tar4}, but this is not a problem since these terms will be removed using the control input. The transformation $\mathcal{K}_0$ is a Volterra transformation, therefore boundedly invertible. Consequently, the states $\mathbf{w}$ and $\vartheta$ have equivalent $L^2$ norms, i.e. there exist two constants $m_\vartheta>0$ and $M_\vartheta>0$ such that
 \begin{align}
    m_\vartheta||\mathbf{w}||_{L^2}^2 &\leq ||\vartheta||_{L^2}^2\leq M_\vartheta||\mathbf{w}||_{L^2}^2.
\end{align}
\subsection{Nominal control law and Lyapunov functional}
From the nominal target system~\eqref{tar1}-\eqref{tar4}, we can easily design a stabilizing control law as~\cite{auriol2016minimum}:
\begin{align}
    {U}(t)=&-{R}_0 \mathbf{w}^+(1,t) +\int_0^1\left(\mathbf{K}_0(1, \xi)\mathbf{w}^+(\xi,t) \right. \notag\\
    & \left. +N_0(1, \xi) \mathbf{w}^- (\xi, t)\right) d \xi. \label{controllaw}
\end{align}
To analyze the stability properties of the target system \eqref{tar1}-\eqref{tar4}, we consider the Lyapunov functional $V_0$ defined by 
\begin{align}
     V_0(t) = \int_0^1 \vartheta^\mathsf{T}(x,t) D_0(x) \vartheta(x,t)dx, \label{lyapunov0}
\end{align}
where 
\begin{align}
     D_0(x) = \text{Diag}\left\{\frac{\mathrm{e}^{-\frac{\nu}{\lambda_{1}^0}x}}{\lambda_{1}^0}, \frac{\mathrm{e}^{-\frac{\nu}{\lambda_{2}^0}x}}{\lambda_{2}^0}, \frac{\mathrm{e}^{-\frac{\nu}{\lambda_{3}^0}x}}{\lambda_{3}^0}, a \frac{\mathrm{e}^{\frac{\nu}{\Lambda^-_0} x}}{\Lambda^-_0} \right\}.
\end{align}
This Lyapunov functional is equivalent to the $L^2$ norm of the system, that is, there exist two constant  $k_1 > 0$ and $k_2>0$ such that
\begin{align}
    k_1||\vartheta||_{L^2}^2 \leq V_0(t) \leq k_2||\vartheta||_{L^2}^2. \label{eqnormV0}
\end{align}
It can also be expressed in terms of the original state as 
\begin{align}
V_0(t)=\int_0^1&(\mathcal{K}_0\mathbf{w}(x,t))^\mathsf{T} D_0(x)\mathcal{K}_0\mathbf{w}(x,t)dx.
\end{align}
Taking the time derivative of $V_0(t)$ and integrating by parts, we get
\begin{align}
    &\Dot{V}_0(t) \leq -\nu V_0(t) + \int_0^1 2  \mathscr{Q}(x,t) D_\alpha^0 \notag\\
    &\left(\Sigma_{0}^{++}(x)\mathscr{Q}(x,t) + \Sigma_{0}^{+-}(x) \mathbf{w}^-(x,t) \right)dx \notag \\
    &\leq - \eta V_0(t) +(q_{10}^2+q_{20}^2+q_{30}^2 - a) \beta^2(0,t),
\end{align}
where 
\begin{align}
    &\eta = \nu - \frac{2}{||\underline{\Lambda^+}|| k_1} (\max_{x\in [0,1]} ||\Sigma^{++}_0(x)||  \nonumber \\
    &+ (1 + \frac{1}{m_\vartheta}) \max_{x\in [0,1]} ||\Sigma^{+-}_0||(x)),\\
    &D_\alpha^0= \text{Diag}\left\{\frac{\mathrm{e}^{-\frac{\nu}{\lambda_{1}^0}x}}{\lambda_{1}^0}, \frac{\mathrm{e}^{-\frac{\nu}{\lambda_{2}^0}x}}{\lambda_{2}^0}, \frac{\mathrm{e}^{-\frac{\nu}{\lambda_{3}^0}x}}{\lambda_{3}^0} \right\}.
\end{align}
We choose $a>0$ and $\nu>0$ such that
\begin{align}
     & q_{10}^2+q_{20}^2+q_{30}^2 - a \leq 0, \eta>0.
\end{align}
where $q_{10}$, $q_{20}$, $q_{30}$ are the elements of $Q_0$. Consequently, we obtain $\dot{V}_0(t)\leq -\eta V_0(t)$, which implies the $L^2$-exponential stability of the system.
\subsection{Mean-square exponential stabilization}
We now state the well-posedness of the stochastic system and then give the main result on mean-square exponential stability. We must first guarantee that the stochastic system \eqref{clpsys1}-\eqref{clpsys4} with the nominal controller~\eqref{controllaw} has a unique solution. We have the following lemma,
\begin{lem}
    For any initial conditions of the Markov system $\mathbf{w}(x,t) \in L^2[0,1]$ and any initial states $\mathscr{X}(t) = \mathscr{X}(0)$ for the stochastic parameters, the system \eqref{clpsys1}-\eqref{clpsys4} with the nominal control law \eqref{controllaw} has a unique solution such that for any $t$,
\begin{align}
    \mathbb{E}\{ ||\mathbf{w}(x,t)|| \} < \infty,\label{unique}
\end{align}
where the $\mathbb{E}\{\cdot\}$ denotes the mathematical expectation.
\end{lem}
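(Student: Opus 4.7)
The plan is to leverage the piecewise-deterministic structure of the Markov-switching dynamics. Between two consecutive jumps of $\mathscr{X}(t)$, the coefficients are frozen in some mode $\mathscr{X}_i \in \mathscr{S}$, and the closed-loop system~\eqref{clpsys1}-\eqref{clpsys4} with~\eqref{controllaw} reduces to a deterministic linear $4\times 4$ hyperbolic system with an integral-type (Volterra) boundary feedback. For such a system, classical well-posedness results for coupled first-order hyperbolic PDEs (see e.g.~\cite{hu2015control}) yield a unique $C^0([t_k, t_{k+1}); L^2(0,1))$ mild solution. I would construct the global solution by concatenating over successive inter-jump intervals, using $\mathbf{w}(\cdot, t_{k+1}^-)$ as initial data for the next segment, and then verify that the resulting stochastic process has a finite first moment at every fixed $t$.

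The argument proceeds in three steps. First, since the transition rates are uniformly bounded by $\tau^\star$, the jump counting process $N(t)$ on $[0,t]$ is stochastically dominated by a Poisson variable with parameter $r\tau^\star t$, so $\mathscr{X}(t)$ is non-explosive and only finitely many switches occur on $[0,t]$ almost surely, with finite moments of every order. Second, an induction over the jump index yields a unique global pathwise solution $\mathbf{w}(\cdot,t) \in L^2(0,1)$, $\mathbb{P}$-a.s. Third, I would establish a pathwise growth estimate of the form
\begin{align}
\|\mathbf{w}(\cdot, t)\|_{L^2}^2 \leq C\, e^{K t}\, \|\mathbf{w}(\cdot, 0)\|_{L^2}^2, \nonumber
\end{align}
with constants $C, K > 0$ depending only on the uniform parameter bounds $\underline{\mathscr{X}}, \overline{\mathscr{X}}$ and on the nominal kernels $\mathbf{K}_0, N_0, R_0$ entering $U(t)$. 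Because these constants are independent of the realization of $\mathscr{X}$, taking expectation of the pathwise inequality immediately gives~\eqref{unique}. The estimate itself follows by differentiating a weighted energy $E(t) = \int_0^1 \mathbf{w}^\mathsf{T}(x,t) P(x) \mathbf{w}(x,t)\, dx$ with $P$ diagonal positive, integrating by parts, and absorbing all coupling and boundary terms via Young's inequality; no smallness hypothesis is needed since only a growth bound, not decay, is required.

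The main obstacle I expect is handling the discontinuities introduced at each switching time. When $Q$ or $R$ jumps, the algebraic boundary relations~\eqref{clpsys3}-\eqref{clpsys4} force a discontinuity in the traces $\mathbf{w}^+(0,\cdot)$ and $\mathbf{w}^-(1,\cdot)$, which then propagates into the interior along the characteristics; the solution is therefore at best piecewise continuous in $(x,t)$ and must be understood in the mild / $L^2$ sense rather than in a classical sense. One must check that the energy inequality above remains valid \emph{across} jump times. This holds because $\mathbf{w}(\cdot, t)$ itself is continuous with values in $L^2(0,1)$ at every switching (only the boundary traces, which carry zero $L^2$-mass in the spatial domain, jump), so the pathwise bound concatenates across jumps with the same uniform constants $C, K$, closing the argument.
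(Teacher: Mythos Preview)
Your proposal is correct and follows essentially the same strategy as the paper: exploit the piecewise-deterministic structure by partitioning time at the jump instants of $\mathscr{X}(t)$, apply a deterministic well-posedness result for linear hyperbolic systems on each inter-jump interval, and concatenate. The paper's own proof is much terser---it cites \cite{zhang2017stochastic} and Theorem~A.4 of \cite{bastin2016stability} for the deterministic step and does not spell out the pathwise energy estimate you provide for~\eqref{unique}---so your version is in fact more detailed than what appears there.
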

\begin{proof}
    This lemma can be easily proved by adjusting the results in \cite{zhang2017stochastic}. Almost every sample path of our stochastic processes are right-continuous step functions with a finite number of jumps in any finite time interval. We can then find a sequence $\{ t_k: k =0,1,\dots \}$ of stopping times such that $t_0 = 0, \lim_{t\to \infty} t_k = \infty$, and $\mathscr{X}(t)=\mathscr{X}(t_k)$ on $t_k \leq t < t_{k+1}$. We start from time $t=0$ and then use \cite[Theorem A.4]{bastin2016stability} for each time interval in the whole time period. Thus, the stochastic system~\eqref{clpsys1}-\eqref{clpsys4} has a unique solution.
\end{proof}
The main goal of this paper is to prove that the control law \eqref{controllaw} can still stabilize the stochastic system \eqref{clpsys1}-\eqref{clpsys4}, provided the nominal parameter $\mathscr{X}_0$ is sufficiently close to the stochastic ones on average. More precisely, we want to show the following sufficient condition for robust stabilization. 
\begin{thm}
There exists a constant $\epsilon^\star>0$, such that if, for all time $t>0$,
\begin{align}
  \mathbb{E}\left(\left|\left|\mathscr{X}(t) - \mathscr{X}_0\right|\right|\right) \leq \epsilon^\star, \label{eqinegstocha}
\end{align}
then the closed-loop system \eqref{clpsys1}-\eqref{clpsys4} with the control law \eqref{controllaw} is mean-square exponentially stable, namely, there exist $\varsigma,\zeta>0$ such that:
\begin{align}
    \mathbb{E}_{[0,(p(0),\mathscr{X}(0)]}(p(t)) \leq \varsigma \mathrm{e}^{-\zeta t} p(0),
\end{align}
where $p(t)=\int_0^1 ||\mathbf{w}(x,t)||_2^2  dx$, while $\mathbb{E}_{[0,(p(0),\mathscr{X}(0)]}$ denotes the conditional expectation at time $t=0$ with initial settings of $p(t) = p(0)$, $\mathscr{X}(t) = \mathscr{X}(0)$.\label{mainthm}
\end{thm}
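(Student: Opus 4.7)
The plan is to use the nominal Lyapunov functional $V_0$ from~\eqref{lyapunov0} as a candidate for the stochastic closed-loop system and to show that the stochasticity perturbs its drift only by a term of order $\mathbb{E}(\|\mathscr{X}(t)-\mathscr{X}_0\|)$. To begin, I would rewrite the stochastic closed-loop system~\eqref{clpsys1}-\eqref{clpsys4} with control law~\eqref{controllaw} in the nominal backstepping coordinates $\vartheta=\mathcal{K}_0\mathbf{w}$. Because $\mathcal{K}_0$ is built from $\mathscr{X}_0$ but is applied to a state governed by $\mathscr{X}(t)$, the resulting target system coincides with~\eqref{tar1}-\eqref{tar4} modulo correction terms that are linear in $\mathscr{X}(t)-\mathscr{X}_0$: in-domain residuals of the form $(\Lambda_i-\Lambda_0)\partial_x\mathbf{w}$ and $(\Theta_i-\Theta_0)\mathbf{w}$ (each further composed with the Volterra transformation), a boundary residual at $x=1$ of the form $(R_i-R_0)\mathbf{w}^+(1,t)$ generated by the mismatch in the control law, and a $(Q_i-Q_0)\beta(0,t)$ contribution at $x=0$.

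Next I would differentiate $V_0$ along this perturbed target system. The deterministic part of the calculation, already carried out in Section~III, yields $-\eta V_0(t)+(q_{10}^2+q_{20}^2+q_{30}^2-a)\beta^2(0,t)$. Integrating by parts against each perturbation and applying Cauchy-Schwarz and Young inequalities, together with the uniform bound $\|\mathscr{X}(t)\|\leq\overline{\mathscr{X}}$, one arrives at an estimate of the form
\begin{align*}
\dot V_0(t)\leq -\eta V_0(t)+C\,\|\mathscr{X}(t)-\mathscr{X}_0\|\,V_0(t),
\end{align*}
where $C>0$ depends only on the nominal parameters, the bound $\overline{\mathscr{X}}$ and the kernels $\mathbf{K}_0,N_0$. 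The boundary contributions at $x=0$ and $x=1$ are absorbed either into the $-\eta V_0$ term by choosing $a$ and $\nu$ suitably, or through a trace estimate obtained by propagating the $\beta$ equation along its characteristics.

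To pass to mean-square stability I would invoke Dynkin's formula with the infinitesimal generator of the coupled PDE-Markov process, using the Kolmogorov equation~\eqref{koleq} to account for jumps between modes and Lemma~1 for well-posedness. Taking expectations yields
\begin{align*}
\frac{d}{dt}\mathbb{E}\bigl(V_0(t)\bigr)\leq -\eta\,\mathbb{E}\bigl(V_0(t)\bigr)+C\,\mathbb{E}\bigl(\|\mathscr{X}(t)-\mathscr{X}_0\|\,V_0(t)\bigr).
\end{align*}
Bounding the cross term by $C\epsilon^\star\,\mathbb{E}(V_0(t))$ via~\eqref{eqinegstocha} and choosing $\epsilon^\star<\eta/C$ produces $\frac{d}{dt}\mathbb{E}(V_0(t))\leq -\tilde\eta\,\mathbb{E}(V_0(t))$ with $\tilde\eta>0$. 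Gronwall's lemma then gives exponential decay of $\mathbb{E}(V_0(t))$, and the norm equivalences~\eqref{eqnormV0} combined with the bounded invertibility of the Volterra operator $\mathcal{K}_0$ translate this into the desired bound on $\mathbb{E}(p(t))$, with $\zeta=\tilde\eta$ and $\varsigma$ proportional to $k_2 M_\vartheta/(k_1 m_\vartheta)$.

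The main technical obstacle is the bound $\mathbb{E}(\|\mathscr{X}(t)-\mathscr{X}_0\|\,V_0(t))\leq\epsilon^\star\,\mathbb{E}(V_0(t))$: since $\mathscr{X}(t)$ and the state-dependent functional $V_0(t)$ are correlated through the entire past of the jump process, it cannot be obtained by naive factorisation. My plan is to condition on the current mode and work with a mode-indexed functional $V(t)=\sum_{i=1}^r \mathbf{1}_{\{\mathscr{X}(t)=\mathscr{X}_i\}}V_{0,i}(t)$, so that the jump contributions appear explicitly through the transition rates $\tau_{ij}$ and are uniformly bounded by $\tau^\star$; these extra contributions are then absorbed into the dissipation provided $\epsilon^\star$ is small enough, closing the Gronwall estimate.
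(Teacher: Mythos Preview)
Your global strategy—map through $\mathcal{K}_0$, identify residuals that are linear in $\mathscr{X}_j-\mathscr{X}_0$, differentiate a Lyapunov functional, and invoke Dynkin—is exactly the paper's. Two technical points differ. First, the paper does not work with the fixed weight $D_0(x)$ but with a \emph{mode-dependent} $D_j(x)$ (equation~\eqref{defDj}), so that integration by parts against $\Lambda_j\partial_x$ produces $-\nu V_j$ exactly and the transport mismatch $(\Lambda_j-\Lambda_0)\partial_x\mathbf{w}$ never appears as a separate term; the cost is the jump contribution $\sum_\ell(V_\ell-V_j)\tau_{j\ell}$ in the generator, bounded by $d_1\sum_\ell\tau_{j\ell}\|\mathscr{X}_\ell-\mathscr{X}_j\|V$. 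Your pivot to a mode-indexed functional in the last paragraph is therefore the right move and should be the starting point, not an afterthought. Second, the boundary residual at $x=1$ is not handled by a trace estimate along characteristics: it simply yields a term $(d_2\mathbb{E}\|\mathscr{X}(t)-\mathscr{X}_0\|-\mathrm{e}^{-\nu/\bar\lambda})\sum_k\alpha_k^2(1,t)$, which is made negative by the smallness condition~\eqref{eqinegstocha} itself.

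The genuine gap is the step you flag yourself. Conditioning on the current mode does \emph{not} by itself close the Gronwall estimate, because after taking expectations you still face $\mathbb{E}(\|\mathscr{X}(t)-\mathscr{X}_0\|V(t))$, and the mode-indexed generator only converts this into $\sum_j\|\mathscr{X}_j-\mathscr{X}_0\|\,\mathbb{E}(V(t)\mathbf{1}_{\mathscr{X}(t)=\mathscr{X}_j})$, which is the same correlated object. The paper's resolution is different: it forms the deterministic combination $\sum_j P_{i_0j}(0,t)L_jV(t)$ and uses the Kolmogorov equation~\eqref{koleq} to rewrite the mode sums, obtaining a bound $\sum_j P_{i_0j}L_jV\leq -\phi(t)V(t)$ with $\phi(t)=\bar\eta-d_1\mathcal{Z}(t)-(M_1+d_1r\tau^\star)\mathbb{E}\|\mathscr{X}(t)-\mathscr{X}_0\|$ \emph{deterministic}. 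It then applies Dynkin not to $V$ but to the integrating-factor functional $\Psi(t)=\exp\bigl(\int_0^t\phi(s)\,ds\bigr)V(t)$; since $\phi$ is deterministic, $\mathbb{E}\Psi(t)=\exp(\int_0^t\phi)\,\mathbb{E} V(t)$ factors cleanly, and $\mathbb{E}\Psi(t)\leq\Psi(0)$ gives the decay once one bounds $\int_0^t\mathcal{Z}(s)\,ds\leq\epsilon^\star+r\tau^\star\epsilon^\star t$. This integrating-factor device with deterministic $\phi$ is the missing idea in your outline; without it the Gronwall-after-expectation route does not close.
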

This theorem will be proved in the next section.
\section{Lyapunov Analysis}
In this section, we consider the closed-loop stochastic system with the nominal controller~\eqref{controllaw}. The objective is to prove Theorem 2. The proof will rely on a Lyapunov analysis. More precisely, we will consider the following stochastic Lyapunov functional candidate
\begin{align}
    V(t)=\int_0^1&(\mathcal{K}_0\mathbf{w}(x,t))^\mathsf{T} D(t,x)\mathcal{K}_0\mathbf{w}(x,t)dx, \label{Lyapfun}
\end{align}
where the diagonal matrix $D(t,x)=D_j(x)$ if $\mathscr{X}(t)=\mathscr{X}_j$, and where
\begin{align}
     D_j(x) = \text{Diag}\left\{\frac{\mathrm{e}^{-\frac{\nu}{\lambda_{1}^j}x}}{\lambda_{1}^j}, \frac{\mathrm{e}^{-\frac{\nu}{\lambda_{2}^j}x}}{\lambda_{2}^j}, \frac{\mathrm{e}^{-\frac{\nu}{\lambda_{3}^j}x}}{\lambda_{3}^j}, a \frac{\mathrm{e}^{\frac{\nu}{\Lambda^-_j} x}}{\Lambda^-_j} \right\}.
    \label{defDj}
\end{align}
We consider that the parameters $\nu$ and $a$ introduced in the definition of $D_j$ can still be tuned. In the nominal case $\mathscr{X}(t) = \mathscr{X}_0$, the Lyapunov functional $V(t)$ corresponds to $V_0$. It is noted that inequality~\eqref{eqnormV0} still holds for $V(t)$ (even if the constants $k_1$ and $k_2$ may change). 
\subsection{Target system in stochastic mode $\mathscr{X}_j$}
In this section, we consider that~$\mathscr{X}(t)=\mathscr{X}_j$ at time $t$. We can define the state $\vartheta=(\alpha_1,\alpha_2,\alpha_3,\beta)=\mathcal{K}_0\mathbf{w}$. Our objective is first to obtain the equations verified by the state $\vartheta$ that appears in the Lyapunov functional~\eqref{Lyapfun}.  It verifies the following set of equations
\begin{align}
\mathscr{Q}_t(x,t)+\Lambda^{+}_j \mathscr{Q}_x(x,t) =& \Sigma^{++}_j(x)\mathbf{w}^+(x,t) \notag \\
&+\Sigma^{+-}_j(x) \mathbf{w}^-(x,t), \label{tar1sto}
\end{align}
\begin{align}
&\beta_t\left(x, t\right)-\Lambda^{-}_j \beta_x(x, t) =\mathbf{f}_{1j}(x) \mathbf{w}^+(x,t)
+\mathbf{f}_{2j}(x) \beta(0, t) \notag \\
&+\int_0^x \mathbf{f}_{3j}(x,\xi)\mathbf{w}^+(\xi,t) d \xi 
+\int_0^x \mathbf{f}_{4j}(x,\xi) \mathbf{w}^-(\xi,t) d \xi, \label{tar2sto}
\end{align}
with the boundary conditions:
\begin{align}
\mathscr{Q}(0,t) &={Q}_j \beta(0, t), \label{tar3sto}\\
\beta(1, t) &=\left({R}_j-{R}_0\right)\mathscr{Q}(1,t), \label{tar4sto}
\end{align}
where the functions are defined by:
\begin{align}
    \mathbf{f}_{1j}(x) &= \Sigma^{-+}_j(x)+\Lambda^-_j \mathbf{K}_0(x, x)+\mathbf{K}_0(x, x) \Lambda^{+}_j,\\
    \mathbf{f}_{2j}(x) &= -\mathbf{K}_0(x, 0) \Lambda^{+}_j {Q}_j+N_0(x, 0) \Lambda^{-}_j,\\
    \mathbf{f}_{3j}(x,\xi) &= \Lambda^{-}_j (\mathbf{K}_0)_{x}(x, \xi)-(\mathbf{K}_0)_{\xi}(x, \xi) \Lambda^{+}_j \notag \\
     &- \mathbf{K}_0(x, \xi) \Sigma^{++}_j(\xi)-N_0(x, \xi) \Sigma^{-+}_j(\xi),\\
    \mathbf{f}_{4j}(x,\xi) &= \Lambda^{-}_j (N_0)_{x} (x, \xi)+\Lambda^{-}_j (N_0)_{\xi}(x, \xi) \notag\\
    &-\mathbf{K}_0(x, \xi) \Sigma^{+-}_j(\xi).
\end{align}
All the terms that depend on $\mathbf{w}$ in the target system~\eqref{tar1sto}-\eqref{tar4sto} could be expressed in terms of $\vartheta$ using the inverse transformation~$\mathcal{K}_0^{-1}$. However, this would make the computations more complex and is not required for the stability analysis.
It is important to emphasize that all the terms on the right-hand side of equation \eqref{tar2sto} become \emph{small} if the stochastic parameters are close enough to the nominal ones. More precisely, we have the following lemma
\begin{lem}
There exists a constant $M_0$, such that for any realization $\mathscr{X}(t)=\mathscr{X}_j \in \mathscr{S}$, for any $(x,\xi)\in \mathcal{T}$
    \begin{align}
        ||\mathbf{f}_{\mathfrak{i}j}|| < M_0  \left|\left|\mathscr{X}_j-\mathscr{X}_0\right|\right|, \quad \mathfrak{i} \in \{1,2,3,4\}.
    \end{align}\label{lemboundf}
\end{lem}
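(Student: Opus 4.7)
The key observation is that when $\mathscr{X}_j = \mathscr{X}_0$, each of the four functions $\mathbf{f}_{\mathfrak{i}j}$ vanishes identically by construction. Indeed, substituting the nominal parameters shows $\mathbf{f}_{10} = 0$ by the boundary condition~\eqref{ker3}, $\mathbf{f}_{20} = 0$ by the boundary condition~\eqref{ker4}, $\mathbf{f}_{30} = 0$ by the kernel PDE~\eqref{ker1}, and $\mathbf{f}_{40} = 0$ by the kernel PDE~\eqref{ker2}. This is unsurprising: the backstepping kernels $\mathbf{K}_0$ and $N_0$ were designed precisely to cancel these source terms in the nominal target system. The plan is therefore to start from the trivial identity $\mathbf{f}_{\mathfrak{i}j} = \mathbf{f}_{\mathfrak{i}j} - \mathbf{f}_{\mathfrak{i}0}$ and exploit this cancellation.

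Next, I would rewrite each difference as a linear combination of parameter differences weighted by the nominal kernels. For instance, one obtains directly
\begin{align*}
\mathbf{f}_{1j}(x) = (\Sigma^{-+}_j - \Sigma^{-+}_0)(x) + (\Lambda^-_j - \Lambda^-_0)\mathbf{K}_0(x,x) + \mathbf{K}_0(x,x)(\Lambda^+_j - \Lambda^+_0).
\end{align*}
For $\mathbf{f}_{2j}$, which contains the bilinear factor $\Lambda^+_j Q_j$, I would telescope via the identity $\Lambda^+_j Q_j - \Lambda^+_0 Q_0 = (\Lambda^+_j - \Lambda^+_0)Q_j + \Lambda^+_0(Q_j - Q_0)$, and handle $N_0(x,0)\Lambda^-_j$ similarly. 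The same add-and-subtract trick reduces $\mathbf{f}_{3j}$ and $\mathbf{f}_{4j}$ to finite sums of terms, each of which is the product of a bounded quantity (a nominal kernel, one of its partial derivatives, or a stochastic parameter lying in $[\underline{\mathscr{X}}, \overline{\mathscr{X}}]$) and a single parameter difference drawn from $\{\Lambda^\pm_j - \Lambda^\pm_0,\ \Sigma^{\bullet\bullet}_j - \Sigma^{\bullet\bullet}_0,\ Q_j - Q_0\}$.

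To turn this into the pointwise bound, I would invoke the well-posedness of the kernel system~\eqref{ker1}--\eqref{ker4} (the adaptation of~\cite[Theorem 3.3]{hu2015control} mentioned just after those equations), which ensures that $\mathbf{K}_0$, $N_0$ and their first partial derivatives are piecewise continuous on the compact triangle $\mathcal{T}$, hence uniformly bounded there. Combined with the assumed bounds $||\mathscr{X}_j|| \leq \overline{\mathscr{X}}$ on the stochastic coefficients, each individual term in the telescoped expressions is bounded by a constant multiple of $||\mathscr{X}_j - \mathscr{X}_0||$ (since every parameter difference is dominated by this composite norm). Taking $M_0$ to be the largest of the resulting constants across $\mathfrak{i} \in \{1,2,3,4\}$ yields the claim uniformly in $(x,\xi) \in \mathcal{T}$ and $j \in \{1,\dots,r\}$.

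The main (and only mild) obstacle is the bookkeeping for $\mathbf{f}_{3j}$ and $\mathbf{f}_{4j}$, where several matrix products involving $\mathbf{K}_0$, $N_0$, and their derivatives must be telescoped simultaneously; one has to be careful that every residual factor after telescoping is either a bounded nominal kernel quantity or a bounded stochastic coefficient, so that the remaining parameter difference carries the smallness. No additional regularity, functional-analytic, or stochastic ingredients are needed beyond the a priori bounds on the nominal kernels.
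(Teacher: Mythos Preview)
Your proposal is correct and follows essentially the same approach as the paper: rewrite each $\mathbf{f}_{\mathfrak{i}j}$ as $\mathbf{f}_{\mathfrak{i}j}-\mathbf{f}_{\mathfrak{i}0}$ using the kernel equations~\eqref{ker1}--\eqref{ker4}, telescope the resulting expressions into sums of bounded kernel quantities times parameter differences, and take the maximum constant. The paper only writes out the $\mathbf{f}_{1j}$ case explicitly and states that the remaining three follow similarly, so your more detailed treatment of the bilinear terms in $\mathbf{f}_{2j}$ and the bookkeeping for $\mathbf{f}_{3j},\mathbf{f}_{4j}$ is, if anything, more complete.
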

\begin{proof}
    Considering the function $\mathbf{f}_{1j}(x)$. For all $x \in [0,1]$, we have
\begin{align}
    \mathbf{f}_{1j}(x) &= \Sigma^{-+}_j(x)+\Lambda^-_j \mathbf{K}_0(x, x)+\mathbf{K}_0(x, x) \Lambda^{+}_j\notag \\
    &= (\Sigma^{-+}_j(x) - \Sigma^{-+}_0(x)) + (\Lambda^-_j-\Lambda^-_0) \mathbf{K}_0(x,x) \notag\\
    &+ \mathbf{K}_0(x,x)(\Lambda^{+}_j-\Lambda^+_0).
\end{align}
Consequently, we obtain the existence of a constant $K_1>0$ such that
\begin{align}
    ||\mathbf{f}_{1j}|| &\leq K_1\left|\left|\mathscr{X}_j-\mathscr{X}_0\right|\right|.
\end{align}
The other inequalities for $\mathbf{f}_2(x)$, $\mathbf{f}_3(x,\xi)$ and $\mathbf{f}_4(x,\xi)$ can also be derived similarly. This finishes the proof.
\end{proof}
\subsection{Derivation of the Lyapunov function}
Let us consider the Lyapunov functional~ $V$ defined in equation~\eqref{Lyapfun}. Its infinitesimal generator $L$ is
defined as~\cite{ross2014introduction}
\begin{align}
 L V(\mathbf{w},s_2) &=\limsup _{\Delta t \rightarrow 0^{+}} \frac{1}{\Delta t} \times \mathbb{E}(V(\mathbf{w}(t+\Delta t), \mathscr{X}(t+\Delta t))\notag\\
&-V(\mathbf{w}(t), \mathscr{X}(t))).
\end{align}
We define $L_j$, the infinitesimal generator of $V$ obtained  by fixing $\mathscr{X}(t) = \mathscr{X}_j \in \mathscr{S}$. We have\begin{align}
L_j V(\mathbf{w}) & =\frac{d V}{d \mathbf{w}}\left(\vartheta, \mathscr{X}_j\right) h_j(\vartheta) +\sum_{\ell \in \mathscr{S}}\left(V_{\ell}(\mathbf{w})-V_j(\mathbf{w})\right) \tau_{j \ell},
\end{align}
where $V_\ell(\mathbf{w})=V(\mathbf{w},s_2^\ell)$, and where the operator $h_j$ is defined by 
\begin{align}
    h_j(\vartheta)=\left(\begin{array}{l}
-\Lambda^+_{j} \mathscr{Q}_x(x,t)+\Sigma_{j}^{++}(x)\mathbf{w}^+(x,t)\\
+\Sigma_{j}^{+-}(x)\mathbf{w}^-(x,t)\\
\Lambda_{j}^{-} \beta_x(x, t)+\mathbf{f}_{1j}(x)\mathbf{w}^+(x,t) \\
+ \mathbf{f}_{2j}(x) \beta(0, t)+\int_0^x \mathbf{f}_{3j}(x,\xi)\mathbf{w}^+(\xi,t) d \xi\\
+\int_0^x \mathbf{f}_{4j}(x,\xi)\mathbf{w}^-(\xi,t) d \xi
\end{array}\right).
\end{align}
To shorten the computations, we denote in the sequel $V(t), LV(t), V_j(t)$ and $L_jV(t)$ instead of (respectively) $V(\mathbf{w},\mathscr{X}(t))$, $LV(\mathbf{w},\mathscr{X}(t))$, $V(\mathbf{w},\mathscr{X}_j)$ and $L_j(V(\mathbf{w}))$. From now, we consider that $\mathscr{X}(t=0)=\mathscr{X}_{i_0}\in\mathscr{S}$. We have the following lemma.
\begin{lem}
There exists $\Bar{\eta}>0$, $M_1 > 0$ and $d_1,d_2 > 0$ such that the Lyapunov functional $V(t)$ satisfies
\begin{align}
&\sum_{j=1}^r P_{i j}(0, t) L_j V(t) \leq -V(t)\Bigg(\Bar{\eta}-d_1 \mathcal{Z}(t) \notag\\
&\left.-\left(M_1+d_1 r \tau^{\star}\right) \mathbb{E}\left(\left|\left|\mathscr{X}(t)-\mathscr{X}_0\right|\right|\right)\right)\notag \\
&+\sum_{k=1}^3 (d_2 \mathbb{E}(\left|\left|\mathscr{X}(t)-\mathscr{X}_0\right|\right|)-\mathrm{e}^{-\frac{\nu}{\bar \lambda}})\alpha_k^2(1,t)
\end{align}
where the function $\mathcal{Z}(t)$ is defined as:
\begin{align}
\mathcal{Z}(t)=\sum_{j=1}^r\left|\left|\mathscr{X}(t)-\mathscr{X}_0\right|\right|\left(\partial_t P_{i j}(0, t)+\mathfrak{c}_j P_{i j}(0, t)\right)
\end{align} \label{lemlyapunovfunctional}
\end{lem}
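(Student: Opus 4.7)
The proof strategy is to compute $L_jV(t)$ for each fixed mode, then sum against $P_{ij}(0,t)$ to handle the averaged behavior. Starting from the decomposition $L_jV(t) = \frac{dV}{d\mathbf{w}}(\vartheta,\mathscr{X}_j)h_j(\vartheta) + \sum_{\ell\in\mathscr{S}}(V_\ell(t)-V_j(t))\tau_{j\ell}$, the plan is to treat the ``drift'' term by differentiating $V_j$ in time along the stochastic target system \eqref{tar1sto}--\eqref{tar4sto}, and to treat the ``jump'' sum by exploiting the Kolmogorov equation~\eqref{koleq} to obtain the $\mathcal{Z}(t)$ structure visible in the statement.

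For the drift contribution, I substitute $\mathscr{Q}_t = -\Lambda^+_j\mathscr{Q}_x + \Sigma^{++}_j\mathbf{w}^+ + \Sigma^{+-}_j\mathbf{w}^-$ and the $\beta$-equation into $\dot V_j$ and integrate by parts against the diagonal weight $D_j(x)$. The terms that only involve the nominal parameters reproduce, up to the change of indices, the calculation that yielded $\dot V_0 \leq -\eta V_0 + (q_{10}^2+q_{20}^2+q_{30}^2-a)\beta^2(0,t)$ in the nominal case, so they contribute a term $-\bar\eta V_j(t)$ plus a harmless boundary term at $x=0$ (using $\mathscr{Q}(0,t)=Q_j\beta(0,t)$ and $|Q_j|^2-|Q_0|^2 = O(\|\mathscr{X}_j-\mathscr{X}_0\|)$, which can be absorbed into the $\mathbb{E}(\|\mathscr{X}(t)-\mathscr{X}_0\|)$ factor). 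The additional source terms driven by $\mathbf{f}_{1j},\mathbf{f}_{2j},\mathbf{f}_{3j},\mathbf{f}_{4j}$ are controlled by Lemma~\ref{lemboundf}: Young's inequality, the Cauchy--Schwarz inequality on the integral kernels, the norm equivalence $m_\vartheta\|\mathbf{w}\|_{L^2}^2\leq\|\vartheta\|_{L^2}^2\leq M_\vartheta\|\mathbf{w}\|_{L^2}^2$, and \eqref{eqnormV0} convert each of them into a contribution bounded by $M_1\|\mathscr{X}_j-\mathscr{X}_0\|\,V_j(t)$. Finally, the boundary term from integration by parts at $x=1$ in the $\beta$-equation, combined with $\beta(1,t)=(R_j-R_0)\mathscr{Q}(1,t)$, yields a term of the form $(d_2\|\mathscr{X}_j-\mathscr{X}_0\|-e^{-\nu/\bar\lambda})\sum_{k=1}^3\alpha_k^2(1,t)$, since $(R_j-R_0)^2 = O(\|\mathscr{X}_j-\mathscr{X}_0\|^2)\leq \bar{\mathscr{X}}\|\mathscr{X}_j-\mathscr{X}_0\|$ using the uniform bound on $\mathscr{X}$.

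For the jump contribution, after multiplying by $P_{ij}(0,t)$ and summing on $j$, I swap the order of summation to rewrite
\[
\sum_{j=1}^r P_{ij}(0,t)\sum_{\ell\in\mathscr{S}}(V_\ell-V_j)\tau_{j\ell}
=\sum_{\ell=1}^r V_\ell\Bigl(\sum_{j=1}^r P_{ij}(0,t)\tau_{j\ell}\Bigr)-\sum_{j=1}^r P_{ij}(0,t)V_j\,\mathfrak{c}_j.
\]
Using \eqref{koleq}, the inner sum equals $\partial_t P_{i\ell}(0,t)+\mathfrak{c}_\ell P_{i\ell}(0,t)$. Replacing $V_\ell$ by $V_j + (V_\ell-V_j)$ and using the Lipschitz-type bound $|V_\ell-V_j| \leq C(\|\mathscr{X}_\ell-\mathscr{X}_0\|+\|\mathscr{X}_j-\mathscr{X}_0\|)V(t)$ (obtained directly from \eqref{defDj} and the fact that $x\mapsto e^{-\nu x/\lambda}/\lambda$ is smooth in $\lambda$ on the compact interval $[\underline{\mathscr{X}},\bar{\mathscr{X}}]$), the $V_j$ parts recombine with $-P_{ij}\mathfrak{c}_j V_j$ to vanish, while the remainder contributes precisely $-d_1V(t)\mathcal{Z}(t)$ and an error $d_1 r\tau^\star\mathbb{E}(\|\mathscr{X}(t)-\mathscr{X}_0\|)V(t)$, after using $\sum_j P_{ij}(0,t)\|\mathscr{X}_j-\mathscr{X}_0\|=\mathbb{E}(\|\mathscr{X}(t)-\mathscr{X}_0\|)$ and $\mathfrak{c}_\ell\leq r\tau^\star$.

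The main obstacle I anticipate is the bookkeeping required to identify every perturbative term with a clean multiplicative factor $\|\mathscr{X}_j-\mathscr{X}_0\|$ so that, after summation against $P_{ij}(0,t)$, it reassembles into either $\mathbb{E}(\|\mathscr{X}(t)-\mathscr{X}_0\|)V(t)$ or into the $\mathcal{Z}(t)$ structure. In particular, one must keep track of the fact that $V_\ell$ and $V_j$ are both evaluated at the \emph{same} state $\mathbf{w}(\cdot,t)$ (only the weight $D$ changes), so the Lipschitz estimate on $V_\ell-V_j$ goes through the diagonal entries of $D_\ell-D_j$ and not through $\mathbf{w}$ itself; once that is observed, aggregating all contributions and using $\mathbb{E}(\|\mathscr{X}(t)-\mathscr{X}_0\|)=\sum_jP_{ij}(0,t)\|\mathscr{X}_j-\mathscr{X}_0\|$ yields the announced inequality with suitable constants $\bar\eta,M_1,d_1,d_2>0$.
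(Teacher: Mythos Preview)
Your proposal is correct and follows essentially the same route as the paper: bound the drift term $\frac{dV_j}{d\mathbf{w}}h_j$ by integrating by parts and invoking Lemma~\ref{lemboundf} for the $\mathbf{f}_{\mathfrak{i}j}$ perturbations, then handle the jump sum via the Lipschitz estimate $|V_\ell-V_j|\le d_1\|\mathscr{X}_\ell-\mathscr{X}_j\|V(t)$ together with the Kolmogorov identity $\sum_j P_{ij}\tau_{j\ell}=\partial_tP_{i\ell}+\mathfrak{c}_\ell P_{i\ell}$. One small wording issue: in your jump computation the substitution ``$V_\ell=V_j+(V_\ell-V_j)$'' is ill-posed after the swap (there is no free $j$ left); what makes the cancellation work is comparing both $V_\ell$ and $V_j$ to a \emph{fixed} reference (e.g.\ $V_0$), which is exactly the triangle-inequality-through-$\mathscr{X}_0$ step the paper uses.
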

\begin{proof}
    In what follows, we denote $c_i$ positive constants. We will first compute the first term of $L_j$. Consider that $\mathscr{X}(t) =\mathscr{X}_j $. The Lyapunov functional rewrites
    \begin{align}
        V_j(t)=\int_0^1\vartheta^\mathsf{T}(x,t)D_j(x) \vartheta(x,t)dx,
    \end{align}
    \begin{align}
&\frac{d V_j}{d \mathbf{w}}(\mathbf{w}) h_j(\mathbf{w})  \leq-\eta V_j(t)+M_1 \left|\left|\mathscr{X}_j-\mathscr{X}_0\right|\right| V(t) 
\notag \\
&+\left(c_2\left|\overline{\mathscr{X}}-\underline{\mathscr{X}}\right|\varepsilon_0+q_{1j}^2+q_{2j}^2+q_{3j}^2-a\right) \beta^2(0, t) \notag \\
&+\sum_{k=1}^3 (a \mathrm{e}^{\frac{\nu}{\Lambda^-_j}}\left(({R}_j)_k-({R}_0)_k\right)^2 -\mathrm{e}^{-\frac{\nu}{\lambda_{kj}}L})\alpha_k^2(1,t),
\end{align}
where 
\begin{align}
&\eta = \nu - \frac{2}{||\underline{\Lambda^+}|| k_1} (\max_{x\in [0,1]} ||\Sigma^{++}_0(x)||  \nonumber \\
    &+ (1 + \frac{1}{m_\vartheta}) \max_{x\in [0,1]} ||\Sigma^{+-}_0(x)||)-\frac{2 \overline{\mathscr{X}} c_2}{k_1\varepsilon_0}, \\
    &M_1=c_4+ac_3+\frac{c_2}{k_1\varepsilon_0}+c_1.
\end{align}
The coefficients $\nu$, $\varepsilon_0$ and $a$ are chosen such that
\begin{align}
    &\eta >0, \quad c_2\left|\overline{\mathscr{X}}-\underline{\mathscr{X}}\right|\varepsilon_0+{q}_{1j}^2+{q}_{2j}^2+{q}_{3j}^2-a <0.
\end{align}
where the ${q}_{1j}$, ${q}_{2j}$, ${q}_{3j}$ are the elements of ${Q}_j$, $\overline{\mathscr{X}}$ and $\underline{\mathscr{X}}$ are the upper and lower bound of the stochastic parameters. There exists a constant $C_0$ such that for all $1\leq j \leq r$, $V_j(\mathbf{w}) \leq C_0 V(\mathbf{w})$.
Thus, we get the following inequality:
\begin{align}
    &\frac{d V_j}{d \mathbf{w}}(\mathbf{w}) h_j(\mathbf{w})  \leq-\bar \eta V(t)+M_1 \left|\left|\mathscr{X}_j-\mathscr{X}_0\right|\right| V(t) \nonumber \\
    &+\sum_{k=1}^3 (a \mathrm{e}^{\frac{\nu}{\Lambda^-_j}}\left(({R}_j)_k-({R}_0)_k\right)^2 -\mathrm{e}^{-\frac{\nu}{\lambda_{kj}}L})\alpha_k^2(1,t),\label{ineq1}
\end{align}
where $\bar \eta=\eta C_0$. Now, we calculate the second term of $L_j$. We have:
\begin{align}
    &\sum_{l=1}^r\left(V_l(\mathbf{w})-V_j(\mathbf{w})\right) \tau_{j l} =\sum_{l=1}^r  \tau_{j l}\notag \\
    &\left( \int_0^1 \mathcal{K}_0^\mathsf{T}(\mathbf{w}(x,t)) D_l(x) \mathcal{K}_0\mathbf{w}(x,t) dx  \right.\nonumber\\
    &- \left.  \int_0^1 \mathcal{K}_0^\mathsf{T}\mathbf{w}(x,t) D_j(x) \mathcal{K}_0\mathbf{w}(x,t) dx  \right).\nonumber\\
    &\leq  d_1 \sum_{l=1}^r \tau_{jl}\left|\left|\mathscr{X}_l - \mathscr{X}_j\right|\right| V(t), \label{ineq2}
\end{align}
We then calculate the quantity $\Bar{L} = \sum_{j=1}^r P_{ij}(0,t) L_jV(t)$. Using the property of the expectation and we get
\begin{align}
    &\Bar{L} \leq -V(t)\left(\bar \eta - (M_1+d_1 r \tau^\star) \mathbb{E}(\left|\left|\mathscr{X}(t)-\mathscr{X}_0\right|\right|) \right. \notag\\
    & \left.+ d_1 \sum_{j=1}^r\left|\left|\mathscr{X}_j-\mathscr{X}_0\right|\right|\left(\partial_t P_{i j}(0, t)+\mathfrak{c}_j P_{i j}(0, t)\right)\right) \notag \\
    &+\sum_{k=1}^3 (d_2 \mathbb{E}(\left|\left|\mathscr{X}(t)-\mathscr{X}_0\right|\right|)-\mathrm{e}^{-\frac{\nu}{\bar \lambda}})\alpha_k^2(1,t),\end{align}
    This finish the proof of Lemma~\ref{lemlyapunovfunctional}.
\end{proof} 
\subsection{Proof of Theorem 2}
Notice first that if $\epsilon^\star$ is small enough (namely smaller than $\frac{e^{-\frac{\nu}{\bar \lambda}}}{d_2}$) and if inequality~\eqref{eqinegstocha} holds, the term $\sum_{k=1}^3 (d_2 \mathbb{E}(\left|\left|\mathscr{X}(t)-\mathscr{X}_0\right|\right|)-\mathrm{e}^{-\frac{\nu}{\bar \lambda}})\alpha_k^2(1,t) < 0$, then we have the following result based on Lemma \ref{lemlyapunovfunctional}: 
\begin{align}
&\sum_{j=1}^r P_{i j}(0, t) L_j V(t) \leq -V(t)\Bigg(\Bar{\eta}-d_1 \mathcal{Z}(t) \notag\\
&\left.-\left(M_1+d_1 r \tau^{\star}\right) \mathbb{E}\left(\left|\left|\mathscr{X}(t)-\mathscr{X}_0\right|\right|\right)\right).
\end{align}
We define the following function:
\begin{align}
    \phi(t) = \Bar{\eta }- d_1 \mathcal{Z}(t) - (M_1 + d_1 r \tau^\star)\mathbb{E}(||\mathscr{X}(t)-\mathscr{X}_0||).
\end{align}
And then, using the functional $\Psi(t)$:
\begin{align}
    \Psi(t) = \mathrm{e}^{\int_0^t \phi(y) dy} V(t).
\end{align}
With the definition of $\Psi(t)$, taking the expectation of the infinitesimal generator $L$ of $\Psi(t)$ , we get:
\begin{align}
    \mathbb{E}\left(\sum_{j=1}^r P_{ij}(0,t) L_{j}V(t)\right)\leq - \mathbb{E}\left(V(t)\phi(t)\right).
\end{align}
We know that $\mathbb{E}(\sum_{j=1}^r P_{ij}(0,t) L_{j}V(t)) =  \mathbb{E}(LV(t))$, thus
\begin{align}
    \mathbb{E}(LV(t))  \leq - \mathbb{E}(V(t)\phi(t)). \label{inequ}
\end{align}
Then applying the Dynkin's formula~\cite{dynkin2012theory},
\begin{align}
    \mathbb{E}(\Psi(t)) - \Psi(0) = \mathbb{E}\left(\int_0^t L\Psi(y)dy\right) \leq 0.
\end{align}
To calculate the $\mathbb{E}(\Psi(t))$, we write down the formulation of $\Psi(t)$:
\begin{align}
    &\mathbb{E}(\Psi(t)) =  \mathbb{E}\left( V(t) \mathrm{e}^{\int_0^t \phi(y)dy} \right) \notag \\
    & = \mathbb{E}\left( V(t) \mathrm{e}^{\int_0^t (\Bar{\eta} - d_1 \mathcal{Z}(y) - (M_1 + d_1 r \tau^*) \mathbb{E}(||\mathscr{X}(y)-\mathscr{X}_0||))dy} \right).
\end{align}
We already know that 
\begin{align}
    &\int_0^t \mathcal{Z}(y)dy =  \int_0^t \left(\sum_{j=1}^r\left|\left|\mathscr{X}(y)-\mathscr{X}_0\right|\right|\left(\partial_y P_{i j}(0, y) \right.\right.\notag \\
    &\left. +\mathfrak{c}_j P_{i j}(0, y)\right) V(y)\Big) dy \notag\\
    &\leq \mathbb{E}(\left|\left|\mathscr{X}(t)-\mathscr{X}_0\right|\right|) + r\tau^\star \int_0^t \mathbb{E} (\left|\left|\mathscr{X}(y)-\mathscr{X}_0\right|\right|) dy,
\end{align}
where $\tau^\star$ is the largest value of the transition rate. Using this inequality, we get
\begin{align}
    \mathbb{E}(\Psi(t))\geq \mathbb{E} \left( V(t) \mathrm{e}^{\left(-d_1 \epsilon^\star + \int_0^t (\Bar{\eta} - (M_1 + 2d_1 r \tau^\star) \epsilon^\star dy\right)} \right).
\end{align}
Then we take $\epsilon^\star$ as 
\begin{align}
    \epsilon^\star = \frac{\Bar{\eta}}{2(2d_1 r \tau^\star + M_1)},
\end{align}
thus we have 
\begin{align}
    \mathbb{E}(\Psi(t)) \geq \mathbb{E} \left( V(t) \mathrm{e}^{\left(-d_1 \epsilon^\star + \frac{\Bar{\eta}}{2} t\right)} \right).
\end{align}
From the before proof, we know $\mathbb{E}(\Psi(t)) \leq \Psi(0)$, such that
\begin{align}
    \mathbb{E}(V(t)) \leq  \mathrm{e}^{d_1 \epsilon^\star} \mathrm{e}^{-\zeta t} V(0),
\end{align}
where $\zeta = \frac{\Bar{\eta}}{2}$. The function $V(t)$ is equivalent to the $L^2$-norm of the system. This concludes the proof of Theorem \ref{mainthm}.

\section{Numerical Simulation}
In this section, we illustrate our results with simulations. We consider that only the parameter $\lambda_4$ is stochastic. Its
nominal value is $-0.024$. The five other possible values are $\lambda_4^1=-0.02, \lambda_4^2=-0.023, \lambda_4^3=-0.024, \lambda_4^4=-0.025, \lambda_4^5=-0.03)$ and the initial transition probabilities are chosen as $(0.02,0.32,0.32,0.32,0.02)$. The transition rates $\tau_{ij}$ are defined as the same as in ~\cite{auriol2023mean}.
The corresponding matrices in nominal case are setting as:
\begin{align*}
    &\Lambda^{+}_0 =\left[\begin{array}{ccc}
0.0081 & 0 & 0 \\
0 & 0.0037 & 0 \\
0 & 0 & 0.0065
\end{array}\right], \Lambda^{-}_0 =-0.024\\
&Q_0 = \begin{bmatrix}
        -12.29 \\
        -3\\
        8.45
\end{bmatrix}, R_0 = \begin{bmatrix}
        0.0011 & -0.1601 & 0.0034
\end{bmatrix}
\end{align*}

Solving the Kolmogorov forward equation, we get the probability of each state in the simulation process shown in Fig. \ref{Pro}.
\begin{figure}
    \centering
    \includegraphics[width=0.46\textwidth]{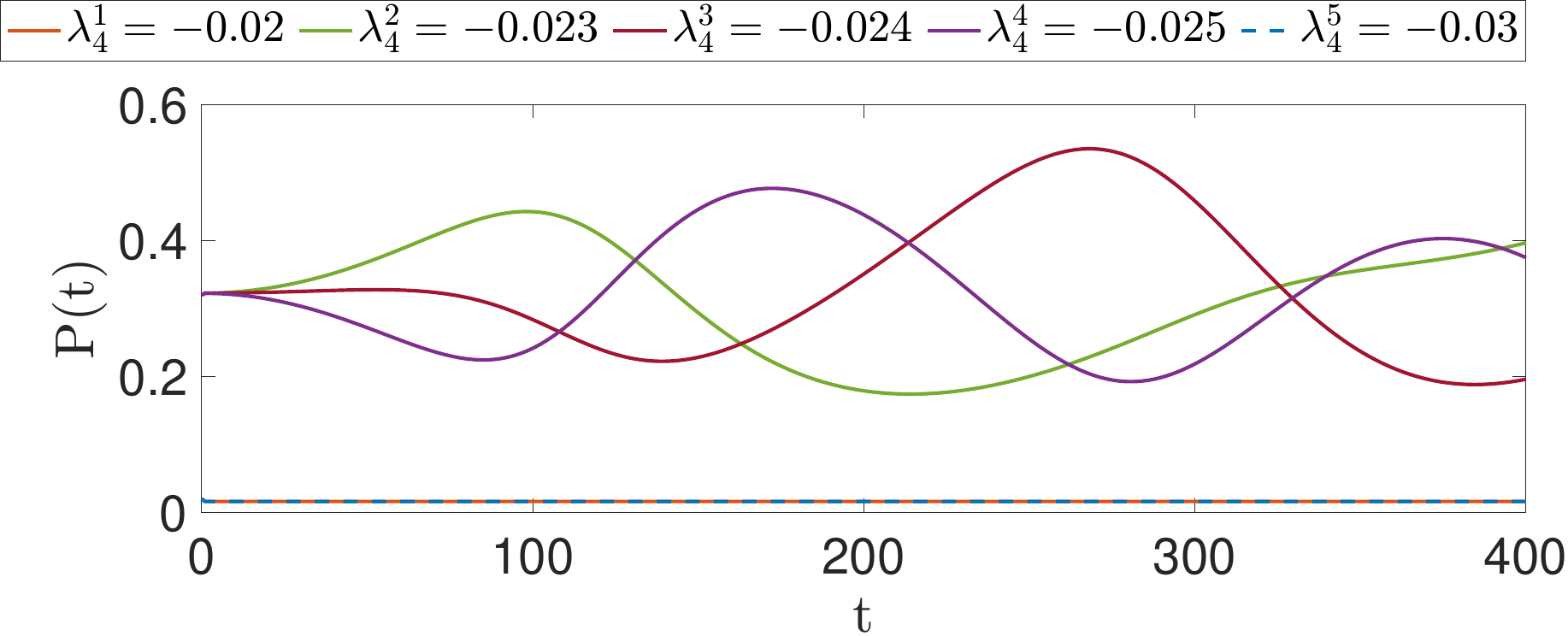}
    \caption{The probability of Markov states}
    \label{Pro}
\end{figure}
From the probability of the Markov states, the system stays near the nominal value in the entire simulation period. Using the Markov process, we conduct the simulation for $t=400$ with the sinusoidal initial conditions, the closed-loop results are shown in Fig. \ref{closedloopres}.
\begin{figure}
\centering
    \begin{subfigure}{0.23\textwidth}
        \includegraphics[width =\textwidth]{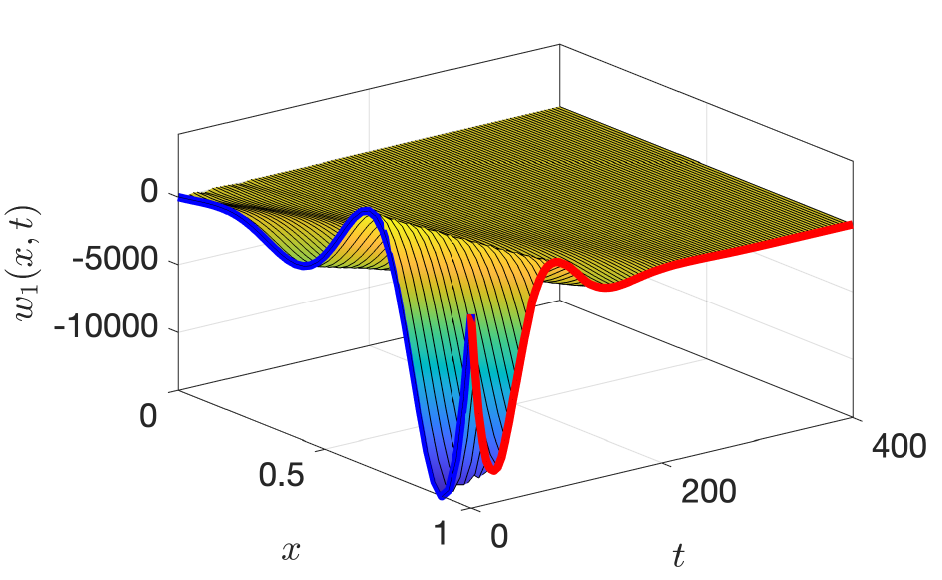}
        \caption{$w_1$}
        \label{w_1}
    \end{subfigure}
    \begin{subfigure}{0.23\textwidth}
        \includegraphics[width =\textwidth]{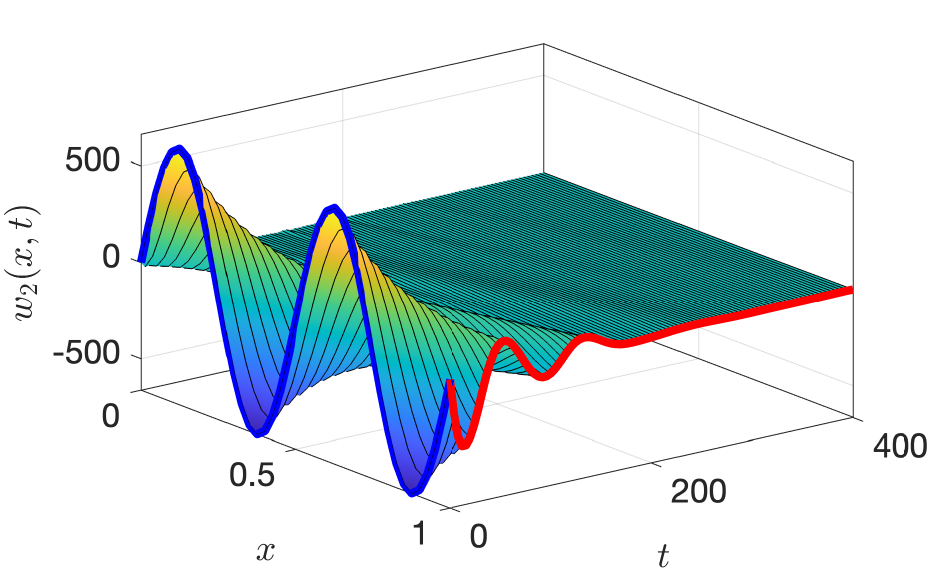}
        \caption{$w_2$}
        \label{w_2}
    \end{subfigure}
    \begin{subfigure}{0.23\textwidth}
        \includegraphics[width =\textwidth]{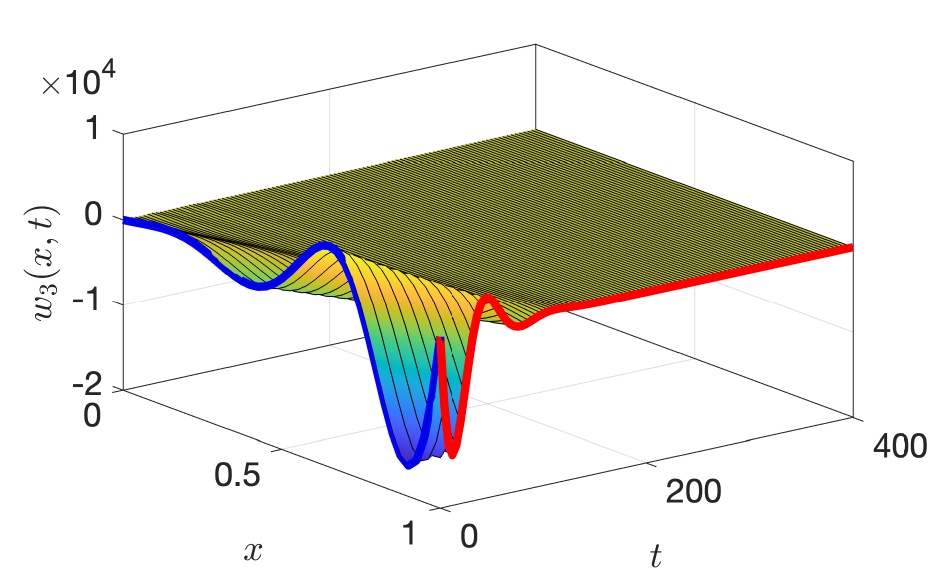}
        \caption{$w_3$}
        \label{w_3}
    \end{subfigure}    
    \begin{subfigure}{0.23\textwidth}
        \includegraphics[width =\textwidth]{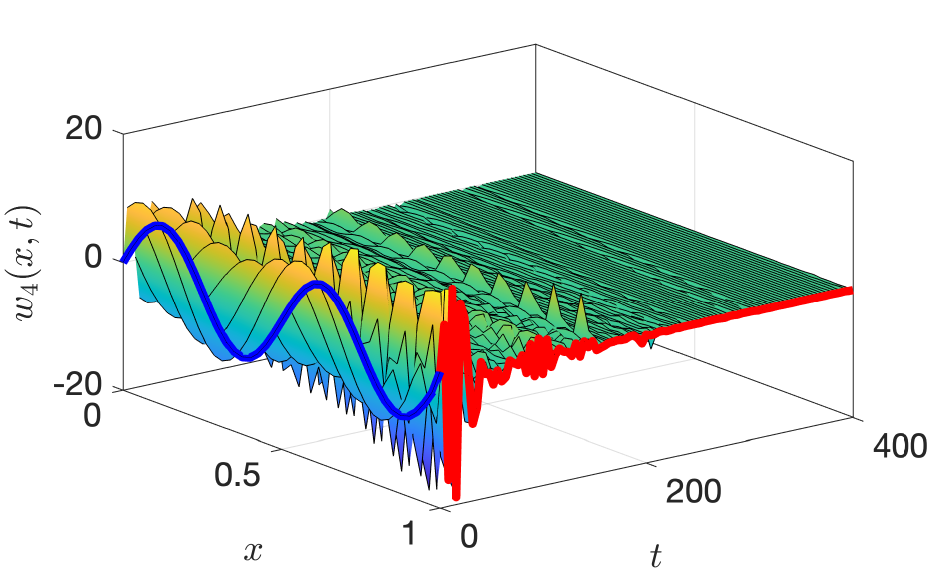}
        \caption{$w_4$}
        \label{w_4}
    \end{subfigure}
    \caption{Closed-loop results of the stochastic system}
    \label{closedloopres}
\end{figure}
All the states with Markov jumping parameters almost converge to zero under the nominal control law, which is consistent with the theoretical results. 
\section{Conclusions}
In this paper, we proposed a backstepping control low that mean-squarely exponentially stabilizes a $4\times 4$ Markov jumping  coupled hyperbolic PDEs. The full-state feedback boundary control law was derived using the backstepping method for a nominal system. By applying Lyapunov analysis, we prove that this nominal control law can stabilize the PDE system with Markov jumping parameters provided the nominal parameters are sufficiently close to the stochastic ones on average. Finally, we use numerical examples to illustrate the efficiency of our approach.Future work will focus on its application in traffic flow systems.

\addtolength{\textheight}{-12cm}   








\bibliography{ref}

\end{document}